\DeclareTextSymbolDefault{\uhorn}{T5}
\numberwithin{equation}{section}
\newtheorem{theorem}{Theorem}[section]
\newtheorem{lemma}[theorem]{Lemma}
\newtheorem{proposition}[theorem]{Proposition}
\newtheorem{corollary}[theorem]{Corollary}
\theoremstyle{definition}
\newtheorem{definition}[theorem]{Definition}
\newtheorem{def-prop}[theorem]{Definition-Proposition}
\newtheorem{remark}[theorem]{Remark}
\newtheorem{example}[theorem]{Example}
\newtheorem{problem}[theorem]{Problem}
\DeclareMathOperator{\supp}{supp}
\newcommand{\ZZ}{{\mathbb Z}}
\newcommand{\cupdot}{\mathbin{\mathaccent\cdot\cup}}
\def\e{{\bf e}}
\def\1{{\bf 1}}
\def\0{{\bf 0}}
\begin{document}


\title{Betti numbers of subgraphs}

\author{Huy T\`ai H\`a}
\address{Tulane University \\ Department of Mathematics \\
6823 St. Charles Ave. \\ New Orleans, LA 70118, USA}
\email{tha@tulane.edu}
\urladdr{http://www.math.tulane.edu/$\sim$tai/}

\author{Duc H\^o}
\address{Tulane University \\ Department of Mathematics \\
6823 St. Charles Ave. \\ New Orleans, LA 70118, USA}
\email{dho@tulane.edu}

\keywords{planar graph, complete graph, complete bipartite graph, edge ideal, squarefree monomial ideal, graded Betti number}
\subjclass[2000]{13D02, 05C10}
\thanks{H\`a is partially supported by the Simons Foundation (grant \#279786). H\^o is partially supported by Tulane's Honors Summer Research grant.}

\begin{abstract}
Let $G$ be a simple graph on $n$ vertices. Let $H$ be either the complete graph $K_m$ or the complete bipartite graph $K_{r,s}$ on a subset of the vertices in $G$. We show that $G$ contains $H$ as a subgraph if and only if $\beta_{i,\alpha}(H) \le \beta_{i,\alpha}(G)$ for all $i \ge 0$ and $\alpha \in \ZZ^n$. In fact, it suffices to consider only the first syzygy module. In particular, we prove that $\beta_{1, \alpha}(H) \le \beta_{1,\alpha}(G)$ for all $\alpha \in \ZZ^n$ if and only if $G$ contains a subgraph that is isomorphic to either $H$ or a multipartite graph $K_{2, \dots, 2, a,b}$.
\end{abstract}

\maketitle


\section{Introduction} \label{sec.intro}
A graph is \emph{planar} if it can be embedded in the plane, i.e., if it can be drawn on the plane in such a way that edges do not intersect in their interiors. This class of graphs is exceptional in many ways; particularly, in the famous Four Color Theorem. Kuratowski's celebrated criterion (cf. \cite{Th}) stated that a graph $G$ is planar if and only if it does not contain any subgraph homeomorphic to $K_5$ or $K_{3,3}$. In this short note, we examine an algebraic interpretation of this criterion.

Our framework will be via the edge ideal construction. This construction gives a one-to-one correspondence between simple graphs and squarefree monomial ideals generated in degree 2. More specifically, let $G = (V,E)$ be a simple graph over the vertex set $V = \{x_1, \dots, x_n\}$. Let $k$ be a field and identify the vertices in $G$ with variables in the polynomial ring $R = k[x_1, \dots, x_n]$. The \emph{edge ideal} of $G$ is defined to be
$$I(G) = \langle x_ix_j ~|~ \{x_i,x_j\} \in E \rangle \subseteq R.$$
We shall investigate the graded Betti numbers of $I(G)$ when $G$ contains a subgraph that is homeomorphic to $K_5$ or $K_{3,3}$.

A graph that is \emph{homeomorphic} to $K_5$ or $K_{3,3}$ can be realized as a subdivision of $K_5$ or $K_{3,3}$. Here, a \emph{subdivision} of a graph results from inserting new vertices into edges. Algebraically, this corresponds to a sequence of replacing a minimal generator $xy$ of the edge ideal by two generators $xz$ and $zy$, where $z$ is a new indeterminate. The reverse-engineering of this process is simple (i.e., finding a variable $z$ that belongs to exactly two minimal generators $xz$ and $zy$, replacing these generators by $xy$, and deleting $z$ altogether). Hence, we shall focus on the study of the graded Betti numbers of $I(G)$ when $G$ contains $K_5$ or $K_{3,3}$ as a subgraph or, more generally, when $G$ contains $K_m$ or $K_{r,s}$ as a subgraph. Our results shall give algebraic interpretations of when $G$ contains $K_m$ or $K_{r,s}$ as a subgraph \emph{at specified vertices}.

Consider the naturally equipped $\ZZ^n$-graded structure of $R = k[x_1, \dots, x_n]$. For a multidegree $\alpha \in \ZZ^n$, let $\beta_{i,\alpha}(G)$ denote the $i$th $\ZZ^n$-graded Betti number of $I(G)$ in degree $\alpha$. In general, for a subgraph $H$ of $G$ it is not necessarily true that $\beta_{i,\alpha}(H) \le \beta_{i,\alpha}(G)$ for all $i \ge 0$ and $\alpha \in \ZZ^n$ (see Example \ref{ex.notSC}). Our motivating question is: for which graphs $H$ if $H$ is a subgraph of $G$ then
$$\beta_{i,\alpha}(H) \le \beta_{i,\alpha}(G) \text{ for all } i \ge 0 \text{ and } \alpha \in \ZZ^n?$$
Our results show that the complete and complete bipartite graphs belong to this class. Our first main theorem is stated as follows.

\begin{theorem}[Theorem \ref{thm.KmKrs}] \label{thm.intro1}
Let $G$ be a simple graph on $n$ vertices. Let $H$ be either the complete graph $K_m$ or the complete bipartite graph $K_{r,s}$ on a subset of the vertices in $G$. Then $G$ contains $H$ as a subgraph if and only if for all $i \ge 0$ and $\alpha \in \ZZ^n$,
\begin{align}
\beta_{i,\alpha}(H) \le \beta_{i,\alpha}(G), \tag{$\sharp$}
\end{align}
Furthermore, if $H = K_m$ then we have the equality in {\rm ($\sharp$)} whenever $\supp(\alpha)$ is a subset of the vertices in $H$.
\end{theorem}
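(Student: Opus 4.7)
The plan is to reduce ($\sharp$) to a topological statement via Hochster's formula, which for the edge ideal gives, for squarefree $\alpha = e_W$,
\[
\beta_{i,\alpha}(I(G)) = \dim_k \widetilde{H}_{|W|-i-2}(\mathrm{Ind}(G_W); k),
\]
where $G_W$ is the induced subgraph of $G$ on $W$ and $\mathrm{Ind}$ denotes the independence complex; all $\ZZ^n$-graded Betti numbers of a squarefree monomial ideal vanish on non-squarefree multidegrees. The converse direction is then immediate at $i = 0$: since $\beta_{0, e_x + e_y}(I(\cdot)) = 1$ precisely when $\{x,y\}$ is an edge and $0$ otherwise, ($\sharp$) at $i=0$ forces $E(H) \subseteq E(G)$.

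For the forward direction I may restrict to $W \subseteq V(H)$; otherwise $H_W$ has an isolated vertex, so $\mathrm{Ind}(H_W)$ is a cone and $\beta_{i,e_W}(H) = 0$. When $H = K_m$ and $W \subseteq V(K_m)$, we have $H_W = K_{|W|}$, and since $H \subseteq G$ forces $G_W$ to contain all edges on $W$, also $G_W = K_{|W|}$. Hence $\mathrm{Ind}(G_W) = \mathrm{Ind}(H_W)$ is $|W|$ disjoint points and $\beta_{i,e_W}(G) = \beta_{i,e_W}(H)$ for every $i$, simultaneously yielding ($\sharp$) and the equality of the ``furthermore'' clause.

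When $H = K_{r,s}$ with bipartition $V(H) = A \sqcup B$, write $W = A' \sqcup B'$ with $A' = W \cap A$ and $B' = W \cap B$. If either part is empty then $H_W$ is edgeless, $\mathrm{Ind}(H_W)$ is a simplex, and $\beta_{i,e_W}(H) = 0$. Otherwise the key structural observation is that $G_W$ contains every edge between $A'$ and $B'$ (because $K_{r,s}$ does), so every independent set of $G_W$ lies entirely in $A'$ or entirely in $B'$. Consequently
\[
\mathrm{Ind}(G_W) = \mathrm{Ind}(G_{A'}) \sqcup \mathrm{Ind}(G_{B'})
\]
as a disjoint union of nonempty simplicial complexes, which forces at least two connected components. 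Therefore $\dim_k \widetilde{H}_0(\mathrm{Ind}(G_W)) \ge 1$, and since $\mathrm{Ind}(H_W)$ is itself $\Delta_{A'} \sqcup \Delta_{B'}$ with only $\widetilde{H}_0$ nonvanishing (of dimension $1$), we obtain ($\sharp$), with the only nontrivial comparison occurring in homological degree $i = |W|-2$.

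The main obstacle is the bipartite case: the inclusion $H_W \subseteq G_W$ induces the \emph{reverse} inclusion $\mathrm{Ind}(G_W) \subseteq \mathrm{Ind}(H_W)$ on independence complexes, which does not a priori allow comparison of reduced homologies in either direction. The rigid ``join-like'' edge structure of $K_{r,s}$ between the two sides is precisely what forces $\mathrm{Ind}(G_W)$ to split as a disjoint union and thus to inherit at least the $\widetilde{H}_0$ of $\mathrm{Ind}(H_W)$; without this rigidity ($\sharp$) can fail for general $H$ (cf.\ Example~\ref{ex.notSC}).
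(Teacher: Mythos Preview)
Your proof is correct and follows essentially the same route as the paper: both arguments use Hochster's formula and the fact that the only nonvanishing multigraded Betti numbers of $K_m$ and $K_{r,s}$ occur on the linear strand (i.e., at $i=|W|-2$, where the comparison reduces to counting connected components of the restricted independence complexes). The main difference is packaging: the paper isolates the linear-strand inequality as a general lemma (Lemma~\ref{prop.linear}, using that $\Delta(G)\big|_\alpha$ is a subcomplex of $\Delta(H)\big|_\alpha$ and hence has at least as many components) and then appeals to the linear-resolution computations for $K_m$ and $K_{r,s}$, whereas you compute the independence complexes directly in each case, exploiting in the bipartite case the decomposition $\mathrm{Ind}(G_W)=\mathrm{Ind}(G_{A'})\sqcup\mathrm{Ind}(G_{B'})$ to get two components. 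Your equality argument for $K_m$ is likewise just an explicit version of the paper's invocation of Lemma~\ref{prop.inducedSG}.
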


In general, graded Betti numbers of an ideal carry rich structures and many properties of the ideal. Having $\beta_{i,\alpha}(K_{r,s}) \le \beta_{i,\alpha}(G)$ for all $i \ge 0$ and $\alpha \in \ZZ^n$ is much more than what would characterize the property that $G$ contains $K_m$ or $K_{r,s}$ as a subgraph. In fact, it suffices to consider only the first syzygy module of the edge ideals. Observe that if $G$ contains $K_m$ as a subgraph then $K_m$ is actually an \emph{induced} subgraph of $G$. As we shall see in Lemma \ref{prop.inducedSG}, in this case, multigraded Betti numbers of $K_m$ agree with corresponding multigraded Betti numbers of $G$. Furthermore, we prove the following theorem.

\begin{theorem}[Theorem \ref{thm.syzKm}] \label{thm.intro20}
Let $G$ be a simple graph on $n$ vertices. Let $H$ be the complete graph $K_m$ on a subset of the vertices in $G$. Then
$$\beta_{1,\alpha}(H) \le \beta_{1,\alpha}(G) \ \text{ for all multidegrees } \alpha \in \ZZ^n$$
if and only if $G$ contains $H = K_m$ as a subgraph.
\end{theorem}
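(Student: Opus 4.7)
The implication $G \supseteq H \Longrightarrow \beta_{1,\alpha}(H) \le \beta_{1,\alpha}(G)$ is immediate from Theorem~\ref{thm.KmKrs}, so the substance of the theorem lies in the reverse implication. For this direction, the plan is to pin down exactly which multidegrees $\alpha$ support a first syzygy of $I(K_m)$, and then to show that forcing $G$ to carry at least as many first syzygies in those multidegrees is enough to produce all $\binom{m}{2}$ edges of $K_m$ inside $G$.

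Since $I(K_m)$ is a squarefree monomial ideal, all multigraded Betti numbers are concentrated in squarefree multidegrees, so we may identify $\alpha$ with a subset $A \subseteq V(K_m)$. A direct computation via Hochster's formula (or via the Taylor complex on the generators of $I(K_m)$) shows that $\beta_{1,A}(K_m)$ is nonzero only when $|A|=3$, in which case it equals $2$: the three edges of the triangle on $A$ admit exactly two independent S-pair syzygies of multidegree $A$, while for $|A| \ge 4$ the independence complex $\Delta(K_m)|_A$ is a discrete set of $|A|$ points, whose higher reduced homology vanishes.

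The same formula applied to $G$ gives, for each $3$-element subset $A \subseteq V(G)$,
$$\beta_{1,A}(G) = \dim_k \widetilde{H}_0(\Delta(G)|_A; k),$$
which is easily seen to equal $0$, $1$, or $2$ according to whether the induced subgraph $G[A]$ has at most one edge, exactly two edges, or is a triangle; in particular $\beta_{1,A}(G) \ge 2$ if and only if $G[A] = K_3$. Combining the two computations, the hypothesis $\beta_{1,A}(H) \le \beta_{1,A}(G)$ applied to every $3$-subset $A \subseteq V(H)$ forces every such $A$ to span a triangle in $G$, hence every pair of vertices of $V(H)$ is joined by an edge in $G$, giving $G[V(H)] = K_m = H$. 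The only delicate point is confirming the vanishing $\beta_{1,A}(K_m) = 0$ for $|A| \ge 4$, which is what guarantees that the $i=1$ hypothesis translates cleanly into a triangle-by-triangle condition and introduces no spurious constraints from larger subsets of $V(H)$.
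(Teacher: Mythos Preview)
Your proof is correct and follows essentially the same route as the paper: both arguments reduce the $(\Rightarrow)$ direction to the observation that $\beta_{1,A}(K_m)=2$ for every $3$-subset $A\subseteq V(H)$ and then show that $\beta_{1,A}(G)\ge 2$ forces $G[A]$ to be a triangle. The only cosmetic difference is that the paper bounds $\beta_{1,xyz}(G)\le 1$ via the Taylor resolution when one edge is missing, whereas you compute $\beta_{1,A}(G)$ directly from Hochster's formula by a case analysis on the number of edges in $G[A]$.
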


The story for complete bipartite graphs is more subtle. The graph $G$ may contain $K_{r,s}$ as a subgraph but not as an induced subgraph. In this case, with one exception, it is still enough to consider only the first syzygy module. Our next main result is stated as follows.

\begin{theorem}[Theorem \ref{thm.syzKrs}] \label{thm.intro2}
Let $G$ be a simple graph on $n$ vertices. Let $H$ be the complete bipartite graph $K_{r,s}$ on a subset of the vertices in $G$. Then
$$\beta_{1,\alpha}(H) \le \beta_{1,\alpha}(G) \ \text{ for all multidegrees } \alpha \in \ZZ^n$$
if and only if $G$ contains either $H = K_{r,s}$ or a $K_{\underbrace{2, \dots, 2}_{t \text{ times}}, a, b}$, where $t \ge 1$ and $a+b+2t = r+s$, as a subgraph.
\end{theorem}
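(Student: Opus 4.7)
The plan is to reduce both directions of the equivalence to a combinatorial condition on three-vertex induced subgraphs of $G$, restricted to the vertex set $W$ of $H = K_{r,s}$ with bipartition $R \cup S$. Applying Hochster's formula to the independence complex of $K_{r,s}$, one verifies that $\beta_{1,\alpha}(K_{r,s})$ vanishes unless $\alpha$ is a squarefree multidegree of total degree three whose support consists of two vertices on one side of the bipartition and one on the other, in which case $\beta_{1,\alpha}(K_{r,s}) = 1$. In parallel, for any graph $G$ and any three-element support $\{u,v,w\}$,
$$\beta_{1,\alpha}(G) = \max\{0,\, e(G[\{u,v,w\}]) - 1\},$$
where $e$ counts edges. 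Consequently the Betti inequality of the theorem is equivalent to the combinatorial requirement that, for every triple of $W$ with two vertices on one side of the bipartition and one on the other, $G$ contain at least two of the three possible edges on that triple.

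For the forward direction, this local condition forces every $y \in S$ to have at most one non-neighbor in $R$ within $G$, and symmetrically every $x \in R$ to have at most one non-neighbor in $S$. If no cross-edge of $K_{r,s}$ is missing from $G$, then $G$ contains $K_{r,s}$ on $W$; otherwise the missing cross-edges form a matching $M = \{(x_{i_\ell}, y_{j_\ell})\}_{\ell=1}^t$ with $t \ge 1$. Applying the two-edge condition to triples of the form $(x_{i_\ell}, x_j, y_{j_\ell})$ and $(y_{j_\ell}, y_k, x_{i_\ell})$ shows that each $x_{i_\ell}$ is adjacent in $G$ to every other vertex of $R$ and each $y_{j_\ell}$ to every other vertex of $S$. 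Setting $R_0 = \{x_{i_\ell}\}_{\ell=1}^t$ and $S_0 = \{y_{j_\ell}\}_{\ell=1}^t$, the sets $R_0$ and $S_0$ then become cliques in $G[W]$ and each is fully joined to its complement in $R$ or $S$. Combined with the fact that all $R$-$S$ edges outside $M$ are present in $G$, this exhibits inside $G[W]$ the complete multipartite graph with parts $\{x_{i_\ell}, y_{j_\ell}\}$ for $\ell = 1, \ldots, t$, $R \setminus R_0$ of size $a = r - t$, and $S \setminus S_0$ of size $b = s - t$, that is, a $K_{2,\ldots,2,a,b}$ with $a + b + 2t = r + s$.

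For the reverse direction, if $G$ contains $K_{r,s}$ on $W$ then Theorem~\ref{thm.KmKrs} immediately gives the inequality. If instead $G$ contains the multipartite graph constructed above, then any triple of $W$ on which $\beta_{1,\alpha}(K_{r,s}) = 1$ cannot lie entirely in a single multipartite part: the cross-pair parts have only two elements, while $R \setminus R_0$ and $S \setminus S_0$ each lie on a single side of the bipartition, whereas the ``two-one'' triple straddles both sides. Hence the multipartite subgraph, and therefore $G$, has at least two edges on every such triple, which yields $\beta_{1,\alpha}(G) \ge 1 = \beta_{1,\alpha}(K_{r,s})$ by the three-vertex formula above. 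The principal obstacle is the forward direction, where the purely local two-edge condition on triples must be promoted to the global clique-plus-join structure on $R_0, S_0$ underlying the multipartite subgraph; the technical core is a careful case analysis on triples internal to or straddling $R_0, S_0$ and on triples involving two distinct matched pairs of $M$.
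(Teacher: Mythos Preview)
Your forward direction is correct and takes a genuinely different route from the paper. The paper argues by induction on $r+s$: given a missing cross-edge $x_1y_1$, the Betti condition at degree-$3$ multidegrees (equation~(\ref{eq.not0}) in the paper) forces both $x_1$ and $y_1$ to be adjacent to every other vertex of $W$; one then peels off the pair $\{x_1,y_1\}$ as a new part and applies the inductive hypothesis to the induced subgraph on $W\setminus\{x_1,y_1\}$ against $K_{r-1,s-1}$, invoking Lemma~\ref{prop.inducedSG} to transfer the Betti hypothesis down. Your argument instead extracts the entire structure in one pass: the two-edge condition on $2{+}1$ triples forces the missing cross-edges to form a matching $M$, and the same condition applied to triples containing a matched pair promotes each endpoint of $M$ to a vertex adjacent to everything on its own side, yielding the multipartite subgraph directly. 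This has the advantage of making the final partition explicit (the size-$2$ parts are the matched pairs, the large parts are $R\setminus R_0\subseteq R$ and $S\setminus S_0\subseteq S$), information that the inductive proof only reveals after unwinding the recursion.

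Your reverse direction, however, does not prove the theorem as stated. You verify the Betti inequality only when $G$ contains ``the multipartite graph constructed above,'' whose size-$2$ parts are cross-pairs and whose large parts lie entirely inside $R$ or inside $S$; your claim that a $2{+}1$ triple cannot sit in a single multipartite part uses exactly this alignment with the bipartition. The theorem, though, asserts the inequality whenever $G$ contains \emph{any} $K_{2,\ldots,2,a,b}$ on $W$ with $a+b+2t=r+s$, with no constraint on how its parts meet $R$ and $S$. The paper's reverse direction does not argue triple by triple; it instead observes that every complete multipartite graph has a linear edge ideal (Lemma~\ref{prop.multi}) and invokes Corollary~\ref{cor.linear}. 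To close the gap you should either route through that linear-resolution argument or supply the missing combinatorial check for an arbitrary multipartite partition of $W$.
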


Our results fit well in a current on-going research program in combinatorial commutative algebra, that investigates the correspondence between algebraic invariants of squarefree monomial ideals and combinatorial structures of graphs. Work along this line includes finding algebraic algorithms to detect the existence of odd cycles in a graph (cf. \cite{FHVT1, SU}) and to detect perfect graphs (cf. \cite{FHVTperfect, SS}), and studying coloring properties of graphs via associated primes of their edge ideals (cf. \cite{FHVT2, FHVTperfect, MV}) and the packing and max-flow-min-cut properties of hypergraphs (cf. \cite{HS, HHT, HHTZ}).

The paper is outlined as follows. In the next section, we collect notation and terminology in graph theory and commutative algebra that we shall use. In particular, we recall Hochster's formula which relates multigraded Betti numbers of a squarefree monomial ideal to reduced cohomology groups of certain simplicial complexes. In Section \ref{sec.Betti}, we focus on the case where $H$ is either the complete or the complete bipartite graph, and examine an algebraic interpretation via multigraded Betti numbers of the property that $G$ contains $H$ as a subgraph. We give explicit formulae for the multigraded Betti numbers of complete and complete multipartite graphs. Our first main result, Theorem \ref{thm.intro1}, is proved in this section. The paper concludes with Section \ref{sec.syz}, where we restrict our attention to the first syzygy module of corresponding edge ideals. We prove our main results, Theorems \ref{thm.intro20} and \ref{thm.intro2}, in this section.


\section{Preliminaries}

We shall follow standard texts in commutative algebra and graph theory (cf. \cite{BM, MS, Peeva}).

\subsection{Graphs and edge ideals} Throughout the paper, $G = (V,E)$ will denote a finite \emph{simple} graph over the vertex set $V = \{x_1, \dots, x_n\}$, that is, a graph with no \emph{loops} nor \emph{multiple edges}. For simplicity of notation, we shall use the \emph{monomial} notation for edges of a graph. That is, we shall write $xy$ for the edge connecting the vertices $x$ and $y$.

\begin{definition} Let $G = (V,E)$ be a simple graph and let $l \ge 2$ be an integer.
\begin{enumerate}
\item $G$ is called a \emph{complete} graph if any pair of its vertices are connected by an edge. The complete graph on $m$ vertices is denoted by $K_m$.
\item $G$ is called a \emph{complete bipartite} graph if there is a \emph{bipartition} of the vertices in $G$, $V = X \cupdot Y$, such that edges of $G$ connect a vertex in $X$ with every vertices in $Y$, and only those, i.e.,
    $$E = \{xy ~\big|~ x \in X \text{ and } y \in Y\}.$$
    The complete bipartite graph where $|X| = r$ and $|Y| = s$ is denoted by $K_{r,s}$.
\item More generally, $G$ is called a \emph{complete $l$-partite} (or \emph{complete multipartite}) graph if there is a partition of the vertices in $G$, $V = X_1 \cupdot \dots \cupdot X_l$ such that the edges in $G$ are
    $$E = \{xy ~\big|~ x \in X_i, y \in X_j \text{ for any } 1 \le i \not= j \le l \}.$$
    The complete $l$-partite graph where $|X_t| = r_t$, for $t = 1, \dots, l$, is denoted by $K_{r_1, \dots, r_l}$.
\end{enumerate}
\end{definition}

\begin{figure}[h]
\begin{center}
\begin{tikzpicture}

\draw [line width=1pt ] (-3,3)--(-1.5,2)  ;
\draw [line width=1pt ] (-1.5,2)--(-2.25,0.5)  ;
\draw [line width=1pt ] (-2.25,0.5)--(-3.75,0.5) ;
\draw [line width=1pt ] (-3.75,0.5)--(-4.5,2)  ;
\draw [line width=1pt ] (-4.5,2)--(-3,3)  ;
\draw [line width=1pt ] (-3,3)--(-2.25,0.5)  ;
\draw [line width=1pt ] (-2.25,0.5)--(-4.5,2)  ;
\draw [line width=1pt ] (-4.5,2)--(-1.5,2) ;
\draw [line width=1pt ] (-1.5,2)--(-3.75,0.5) ;
\draw [line width=1pt ] (-3.75,0.5)--(-3,3) ;
\node at (-3,0) {$K_5$} ;

\shade [shading=ball, ball color=black]  (-3,3) circle (.1) ;
\shade [shading=ball, ball color=black]  (-1.5,2) circle (.1) ;
\shade [shading=ball, ball color=black]  (-2.25,0.5) circle (.1) ;
\shade [shading=ball, ball color=black]  (-3.75,0.5) circle (.1) ;
\shade [shading=ball, ball color=black]  (-4.5,2) circle (.1) ;

\draw [line width=1pt ] (2,3)--(2,0.5)  ;
\draw [line width=1pt ] (2,3)--(4,0.5)  ;
\draw [line width=1pt ] (2,3)--(6,0.5)  ;
\draw [line width=1pt ] (4,3)--(2,0.5)  ;
\draw [line width=1pt ] (4,3)--(4,0.5)  ;
\draw [line width=1pt ] (4,3)--(6,0.5)  ;
\draw [line width=1pt ] (6,3)--(2,0.5)  ;
\draw [line width=1pt ] (6,3)--(4,0.5)  ;
\draw [line width=1pt ] (6,3)--(6,0.5)  ;
\node at (4,0) {$K_{3,3}$} ;

\shade [shading=ball, ball color=black]  (2,0.5) circle (.1) ;
\shade [shading=ball, ball color=black]  (4,0.5) circle (.1) ;
\shade [shading=ball, ball color=black]  (6,0.5) circle (.1) ;
\shade [shading=ball, ball color=black]  (2,3) circle (.1) ;
\shade [shading=ball, ball color=black]  (4,3) circle (.1) ;
\shade [shading=ball, ball color=black]  (6,3) circle (.1) ;

\end{tikzpicture}
\end{center}
\caption{Complete and complete bipartite graphs.}\label{fig.KmKrs}
\end{figure}
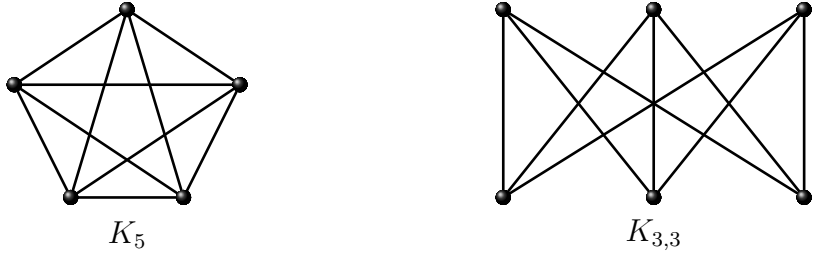

A graph $H$ is called a \emph{subgraph} of $G$ if the vertices of $H$ are vertices in $G$ and the edges of $H$ are edges in $G$.

\begin{definition} A subgraph $H$ of $G$ is called an \emph{induced} subgraph if for any two vertices $x,y$ in $H$, $xy$ is an edge in $H$ if and only if $xy$ is an edge in $G$.
\end{definition}

For a graph $G = (V,E)$, the \emph{complement} graph of $G$, denoted by $G^c$, is the graph over the same vertex set $V$, and for any $x,y \in V$, $xy$ is an edge in $G^c$ if and only if $xy$ is not an edge in $G$.

\begin{definition} Let $G = (V,E)$ be a simple graph
\begin{enumerate}
\item A collection $W \subseteq V$ of the vertices in $G$ is called an \emph{independent set} if no two vertices in $W$ are connected by an edge.
\item The \emph{independence complex} of $G$, denoted by $\Delta(G)$, is the simplicial complex over the vertex set $V$, whose faces are independent sets in $G$.
\end{enumerate}
\end{definition}

Let $k$ be a field and identify the variables of the polynomial ring $R = k[x_1, \dots, x_n]$ with the vertices in $V = \{x_1, \dots, x_n\}$.

\begin{definition} Let $G = (V,E)$ be a simple graph. The \emph{edge ideal} of $G$, denoted by $I(G)$, is defined to be the \emph{squarefree} monomial ideal
$$I(G) = \left< xy ~\big|~ xy \in E \right> \subseteq R.$$
\end{definition}

\begin{example} \label{ex.subK6}
Let $G$ be the graph given in Figure \ref{fig.edgeideal}. Then the edge ideal of $G$ is
$$I(G) = (x_1x_2, x_2x_3, x_3x_4,x_4x_5, x_1x_6) \subseteq R = k[x_1, \dots, x_6].$$
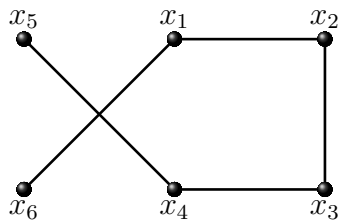
\begin{figure}[h]
\begin{center}
\begin{tikzpicture}

\draw [line width=1pt ] (0,2)--(2,2)  ;
\draw [line width=1pt ] (2,2)--(2,0)  ;
\draw [line width=1pt ] (2,0)--(0,0) ;
\draw [line width=1pt ] (0,0)--(-2,2)  ;
\draw [line width=1pt ] (0,2)--(-2,0)  ;

\shade [shading=ball, ball color=black]  (0,2) circle (.1) node [above] {$x_1$} ;
\shade [shading=ball, ball color=black]  (2,2) circle (.1) node [above] {$x_2$} ;
\shade [shading=ball, ball color=black]  (2,0) circle (.1) node [below] {$x_3$} ;
\shade [shading=ball, ball color=black]  (0,0) circle (.1) node [below] {$x_4$} ;
\shade [shading=ball, ball color=black]  (-2,2) circle (.1) node [above] {$x_5$} ;
\shade [shading=ball, ball color=black]  (-2,0) circle (.1) node [below] {$x_6$} ;

\end{tikzpicture}
\end{center}
\caption{An example of graph and its edge ideal.}\label{fig.edgeideal}
\end{figure}
\end{example}

\subsection{Multigraded Betti numbers}
Let $\e_i$ be the $i$th standard unit vector in $\ZZ^n$, for $i = 1, \dots, n$. The polynomial ring $R = k[x_1, \dots, x_n]$ is naturally equipped with a $\ZZ^n$-graded structure, given by $\deg(x_i) = \e_i$.

Let $M$ be a finitely generated $\ZZ^n$-graded $R$-module. The minimal free resolution of $M$ is of the form
$$0 \longrightarrow \bigoplus_{\alpha \in \ZZ^n} R(-\alpha)^{\beta_{p,\alpha}(M)} \longrightarrow \dots \longrightarrow \bigoplus_{\alpha \in \ZZ^n} R(-\alpha)^{\beta_{0,\alpha}(M)} \longrightarrow M \longrightarrow 0.$$
The numbers $\beta_{i,\alpha}(M)$ are called the \emph{$\ZZ^n$-graded (or multigraded) Betti numbers} of $M$.

\begin{remark} Let $G$ be a simple graph on $n$ vertices $V = \{x_1, \dots, x_n\}$. The multigraded Betti numbers of $G$ are defined to be those of its edge ideal over the corresponding polynomial ring $R = k[x_1, \dots, x_n]$. In particular, we shall often write $\beta_{i,\alpha}(G)$ for $\beta_{i,\alpha}(I(G))$.
\end{remark}

\begin{remark} For $\alpha = (\alpha_1, \dots, \alpha_n) \in \ZZ^n$, let $\supp(\alpha) = \{x_i ~|~ \alpha_i \not= 0\}$, $|\alpha| = \sum_{i=1}^n \alpha_i$, and let $x^\alpha$ denote the monomial $x_1^{\alpha_1} \dots x_n^{\alpha_n}$. We shall sometimes write $\beta_{i, x^\alpha}(G)$ in place of $\beta_{i,\alpha}(G)$, especially when it is more natural to identify the monomial $x^\alpha$. For instance, for the graph $G$ over 6 vertices $\{x_1, \dots, x_6\}$ in Example \ref{ex.subK6}, we may write $\beta_{2, x_1^5x_3^2x_6^3}(G)$ for the multigraded Betti number $\beta_{2, (5,0,2,0,0,3)}(G)$.
\end{remark}

\begin{remark} Consider a graph $H$ whose vertex set is a subset of the vertices in $G$. Assume, for simplicity, that the vertices in $H$ are $\{x_1, \dots, x_m\}$ for some $m \le n$. Let $S = k[x_1, \dots, x_m]$ be the polynomial ring corresponding to $H$. By the ring extension $S \hookrightarrow R = k[x_1, \dots, x_n]$, we can consider $I(H)$ as both an $S$-module and an $R$-module. Since $S \hookrightarrow R$ is a flat extension, the minimal free resolution of $I(H)R$ as an $R$-module is obtained by tensoring that of $I(H)$ as an $S$-module with $R$. Therefore,
$$\beta_{i, \alpha}(I(H)R) = \left\{ \begin{array}{ll}
\beta_{i,\gamma}(H) & \text{if } \gamma = (\gamma_1, \dots, \gamma_m) \in \ZZ^m, \text{ and } \alpha = (\underbrace{\gamma_1, \dots, \gamma_m}_{\gamma}, \underbrace{0, \dots, 0}_{n-m \text{ 0's}}) \\
0 & \text{otherwise.}
\end{array} \right.$$
This allows us to abuse notation and use $\beta_{i,\alpha}(H)$ to denote both $\beta_{i,\alpha}(I(H)R)$ and $\beta_{i,\gamma}(I(H))$, where $\gamma = (\gamma_1, \dots, \gamma_m) \in \ZZ^m, \text{ and } \alpha = (\underbrace{\gamma_1, \dots, \gamma_m}_{\gamma}, \underbrace{0, \dots, 0}_{n-m \text{ 0's}})$. In particular, this also makes sense of statements of the form $\beta_{i,\alpha}(H) \le \beta_{i,\alpha}(G)$ for $\alpha \in \ZZ^n$.
\end{remark}

\begin{example} \label{ex.notSC}
Consider the graph $G$ given in Example \ref{ex.subK6}. Clearly, $G$ is a subgraph of the complete graph $K_6$. Let $R = k[x_1, \dots, x_6]$ be the corresponding polynomial ring. The minimal graded free resolution of $I(G)$ is given by
$$0 \longrightarrow R(-6) \longrightarrow R^4(-5) \longrightarrow R^4(-3) \oplus R^3(-4) \longrightarrow R^5(-2) \longrightarrow I(G) \longrightarrow 0.$$
We shall also see in Lemma \ref{prop.resKm} that the edge ideal of $K_6$ has a linear resolution. Thus, the minimal free resolution of $I(G)$ is not a subcomplex of that of $I(K_6)$. In particular, graded Betti numbers of $I(G)$ are not bounded above by that of $I(K_6)$.

This is also true for multigraded Betti numbers. In particular, we have
$$\beta_{1,x_1x_4x_5x_6}(G) > \beta_{1, x_1x_4x_5x_6}(K_6).$$
This example shows that, in general, if $H$ is a subgraph of $G$, then it is not necessarily true that $\beta_{i,\alpha}(H) \le \beta_{i,\alpha}(G)$ for all $i \ge 0$ and $\alpha \in \ZZ^n$.
\end{example}

We shall often make use of the following formula of Hochster (cf. \cite[Corollary 5.12]{MS}). Here, $\widetilde{H}^\bullet$ stands for \emph{simplicial} cohomology.

\begin{proposition}[Hochster's formula] \label{prop.H}
Let $G$ be a simple graph and let $\Delta = \Delta(G)$ be its independence complex. Then the nonzero multigraded Betti numbers of $I(G)$ lie only in squarefree multidegrees $\alpha \in \{0,1\}^n$, and we have
$$\beta_{i,\alpha}(I(G)) = \dim_k \widetilde{H}^{|\alpha| - i - 2}(\Delta\big|_\alpha; k),$$
where $\Delta\big|_\alpha$ denotes the restriction of $\Delta$ on $\supp(\alpha)$, i.e.,
$$\Delta\big|_\alpha = \{\tau \in \Delta ~|~ \tau \subseteq \supp(\alpha)\}.$$
\end{proposition}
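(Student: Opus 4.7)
The plan is to deduce the stated formula from the classical Hochster formula for Stanley--Reisner rings, which I would prove via the $\ZZ^n$-graded Koszul resolution of the residue field. First, I would observe that $I(G)$ is exactly the Stanley--Reisner ideal $I_\Delta$ of $\Delta = \Delta(G)$: a subset $W \subseteq V$ fails to be an independent set precisely when it contains an edge of $G$, so the minimal non-faces of $\Delta$ are the edges, and $I_\Delta = I(G)$. In particular, $\beta_{i,\alpha}(I(G)) = \beta_{i+1,\alpha}(R/I_\Delta)$ for $i \geq 0$ (assuming $\alpha \neq 0$), via the short exact sequence $0 \to I(G) \to R \to R/I(G) \to 0$, so it suffices to compute $\beta_{j,\alpha}(R/I_\Delta)$ and then reindex by $j = i+1$.

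Next, I would compute $\Tor^R_j(R/I_\Delta,k)_\alpha$ by using the $\ZZ^n$-graded Koszul complex $K_\bullet = K_\bullet(x_1,\dots,x_n; R)$ as a minimal free resolution of $k$. In homological degree $j$, the strand $(K_j \otimes_R R/I_\Delta)_\alpha$ has a $k$-basis indexed by subsets $\tau \subseteq [n]$ with $|\tau| = j$, $\alpha \geq \e_\tau$ componentwise, and the monomial $x^{\alpha - \e_\tau}$ nonzero in $R/I_\Delta$. Any entry of $\alpha$ strictly larger than $1$ would force the relevant monomial into $I_\Delta$ (through a self-divisibility argument applied to the smallest edge in its support), so the strand vanishes unless $\alpha \in \{0,1\}^n$. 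This yields the squarefree-support clause of the proposition.

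For such an $\alpha$ with support $\sigma$, I would reparametrize via $\rho = \sigma \setminus \tau$, so that the basis of the $j$-th strand corresponds to faces $\rho \in \Delta\big|_\sigma$ with $|\rho| = |\sigma| - j$. A direct calculation shows that, under this bijection, the Koszul differential $d(e_\tau) = \sum_{x \in \tau} \pm x \, e_{\tau \setminus x}$ becomes (up to sign) the simplicial coboundary
\[
\rho \;\longmapsto\; \sum_{x \in \sigma \setminus \rho} \pm (\rho \cup \{x\}),
\]
where any term with $\rho \cup \{x\} \notin \Delta\big|_\sigma$ vanishes automatically in the quotient by $I_\Delta$. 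The resulting complex is precisely the augmented simplicial cochain complex of $\Delta\big|_\sigma$ (the empty face arises from $\rho = \emptyset$, i.e.\ $\tau = \sigma$), so its cohomology is the reduced simplicial cohomology. Thus $\beta_{j,\alpha}(R/I_\Delta) = \dim_k \widetilde{H}^{|\alpha|-j-1}(\Delta\big|_\alpha; k)$, and substituting $j = i+1$ gives the stated formula $\beta_{i,\alpha}(I(G)) = \dim_k \widetilde{H}^{|\alpha|-i-2}(\Delta\big|_\alpha; k)$.

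The main technical obstacle is the sign bookkeeping needed to verify that the reparametrized Koszul differential honestly matches the simplicial coboundary, together with ensuring that the augmentation (the empty face) is included so that \emph{reduced} cohomology rather than ordinary cohomology emerges. All remaining steps are routine multigraded linear algebra, and the final reindexing from $R/I(G)$ to $I(G)$ is a standard homological-degree shift.
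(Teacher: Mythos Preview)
The paper does not prove this proposition; it is simply quoted as Hochster's formula with a reference to \cite[Corollary~5.12]{MS}. Your outline is essentially the standard proof given there (tensor the Koszul resolution of $k$ with $R/I_\Delta$, then identify the degree-$\alpha$ strand with the augmented cochain complex of $\Delta\big|_\alpha$), so there is no substantive comparison of approaches to make---you are supplying a proof where the paper supplies a citation, and your identification $I(G)=I_{\Delta(G)}$, the reparametrization $\rho=\sigma\setminus\tau$, and the final index shift $j=i+1$ are all correct.

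One step, however, is not right as written. You assert that if some entry of $\alpha$ exceeds $1$ then the monomial $x^{\alpha-\e_\tau}$ is forced into $I_\Delta$, so that the strand $(K_j\otimes_R R/I_\Delta)_\alpha$ vanishes term by term. This fails already for $I_\Delta=(x_1x_2)$ and $\alpha=(2,0)$: neither $x_1$ nor $x_1^2$ lies in $I_\Delta$, so the strands in homological degrees $0$ and $1$ are each one-dimensional. What is true is that the complex is \emph{acyclic} in such degrees. The standard argument is that when $\alpha_i\ge 2$, squarefreeness of $I_\Delta$ makes multiplication by $x_i$ an isomorphism $(R/I_\Delta)_{\alpha-\e_i}\to (R/I_\Delta)_\alpha$, and the map $e_\tau\mapsto e_{\tau\cup\{i\}}$ (for $i\notin\tau$) then gives a contracting homotopy on $(K_\bullet\otimes R/I_\Delta)_\alpha$. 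With this repair the squarefree-support clause follows, and the remainder of your outline goes through.
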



\section{Multigraded Betti numbers} \label{sec.Betti}

In this section, we shall give an algebraic characterization for the property that a graph contains a subgraph that is isomorphic to either the complete graph $K_m$ or the complete bipartite graph $K_{r,s}$. Our main theorem is stated as follows.

\begin{theorem} \label{thm.KmKrs}
Let $G$ be a simple graph on $n$ vertices. Let $H$ be either the complete graph $K_m$ or the complete bipartite graph $K_{r,s}$ on a subset of the vertices in $G$. Then $G$ contains $H$ as a subgraph if and only if for all $i \ge 0$ and $\alpha \in \ZZ^n$,
\begin{align}
\beta_{i,\alpha}(H) \le \beta_{i,\alpha}(G). \tag{$\sharp$}
\end{align}
Furthermore, if $H = K_m$ then we have the equality in {\rm ($\sharp$)} whenever $\supp(\alpha)$ is a subset of the vertices in $H$.
\end{theorem}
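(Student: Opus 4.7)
The plan is to split the biconditional into two implications and to rely on Hochster's formula (Proposition \ref{prop.H}) throughout. The converse direction $(\sharp) \Rightarrow H \subseteq G$ is immediate from looking only at $i = 0$: for every edge $x_ix_j$ of $H$ we have $\beta_{0, \e_i + \e_j}(H) = 1$, since $x_ix_j$ is a minimal generator of $I(H)$, so $(\sharp)$ forces $\beta_{0, \e_i + \e_j}(G) \ge 1$, and by the flatness remark on $S \hookrightarrow R$ this means $x_ix_j$ is a minimal generator of $I(G)$, i.e., $x_ix_j \in E(G)$. Running over all edges of $H$ yields $H \subseteq G$.

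For the forward direction, Hochster tells us that nonzero multigraded Betti numbers live only in squarefree multidegrees, so I may restrict to $\alpha = \1_W$ for some $W \subseteq V$. If $W \not\subseteq V(H)$ the flatness remark forces $\beta_{i,\alpha}(H) = 0$ and $(\sharp)$ is automatic, so assume $W \subseteq V(H)$. Since $H \subseteq G$, we have $\Delta(G)\vert_W \subseteq \Delta(H)\vert_W$ as simplicial complexes on $W$, and the task reduces to comparing their reduced cohomologies.

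For $H = K_m$: $K_m$ already contains every possible edge on $V(K_m)$, so it is automatically an \emph{induced} subgraph of $G$. Hence the edges of $G$ within $V(K_m)$ agree with those of $K_m$, giving $\Delta(G)\vert_W = \Delta(K_m)\vert_W$ for every $W \subseteq V(K_m)$. Hochster then yields $\beta_{i,\alpha}(K_m) = \beta_{i,\alpha}(G)$, proving both $(\sharp)$ and the ``furthermore'' clause. For $H = K_{r,s}$ with bipartition $V(K_{r,s}) = X \cupdot Y$: decompose $W = A \cupdot B$ with $A \subseteq X$ and $B \subseteq Y$. If $A$ or $B$ is empty, $\Delta(K_{r,s})\vert_W$ is a full simplex on $W$, its reduced cohomology vanishes, and $(\sharp)$ is automatic. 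Otherwise $\Delta(K_{r,s})\vert_W$ is, geometrically, the disjoint union of the simplices on $A$ and on $B$; its only nonvanishing reduced cohomology is $\widetilde{H}^0 \cong k$, so $(\sharp)$ reduces to showing $\widetilde{H}^0(\Delta(G)\vert_W) \ne 0$. But $K_{r,s} \subseteq G$ means that every pair $\{x,y\}$ with $x \in A$ and $y \in B$ is an edge of $G$, so no $1$-simplex of $\Delta(G)\vert_W$ crosses the partition $A \cupdot B$; since connectedness of a simplicial complex is controlled by its $1$-skeleton, $\Delta(G)\vert_W$ is disconnected and $(\sharp)$ holds.

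The main conceptual point distinguishes the two cases: for $K_m$ we obtain \emph{equality} of all multigraded Betti numbers supported inside $V(K_m)$, because $K_m$ is forced to be induced, whereas for $K_{r,s}$ we get only a one-sided inequality. The homotopy-type computation of $\Delta(K_{r,s})\vert_W$ and the translation of $K_{r,s} \subseteq G$ into the disconnectedness of $\Delta(G)\vert_W$ form the technical crux; it is fortunate that, for $K_{r,s}$, only the single codegree $i = |W|-2$ carries any information, which is what keeps the argument clean.
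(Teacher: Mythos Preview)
Your proof is correct and follows essentially the same line as the paper's: the converse via $i=0$, Hochster's formula reducing everything to $\widetilde{H}^0$ and connected-component counts, and the induced-subgraph observation for the $K_m$ equality. The only difference is packaging: the paper first isolates the general fact that the linear strand is always monotone under subgraphs (Lemma~\ref{prop.linear}) and then notes that $I(K_m)$ and $I(K_{r,s})$ have linear resolutions (Corollary~\ref{cor.linear}), whereas you carry out the same disconnectedness argument directly for $K_{r,s}$ without naming the intermediate lemma.
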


Before proving Theorem \ref{thm.KmKrs}, we shall need a number of auxiliary results. We first start with an observation that addresses the \emph{linear strand} of the resolution of subgraphs.

\begin{lemma} \label{prop.linear}
Let $G$ be a simple graph on $n$ vertices, and let $H$ be a subgraph of $G$. Then, for any $i \ge 0$ and $\alpha \in \ZZ^n$ such that $|\alpha| = i+2$, we have
$$\beta_{i,\alpha}(H) \le \beta_{i,\alpha}(G).$$
\end{lemma}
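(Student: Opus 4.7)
The plan is to apply Hochster's formula (Proposition \ref{prop.H}) and reduce the inequality to a statement about connected components of certain restricted independence complexes. First, I would note that if $\alpha$ is not squarefree, then by Hochster's formula both $\beta_{i,\alpha}(H)$ and $\beta_{i,\alpha}(G)$ vanish, so the inequality is trivial. Hence I may assume $\alpha \in \{0,1\}^n$, in which case $\supp(\alpha)$ is a subset of $V(G)$ of cardinality $|\alpha| = i+2 \ge 2$.

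Since $|\alpha| - i - 2 = 0$, Hochster's formula specializes to
\[
\beta_{i,\alpha}(G) = \dim_k \widetilde{H}^{0}(\Delta(G)\big|_\alpha;\, k), \qquad \beta_{i,\alpha}(H) = \dim_k \widetilde{H}^{0}(\Delta(H)\big|_\alpha;\, k),
\]
where, in view of the remark preceding Example \ref{ex.notSC}, $\Delta(H)$ is interpreted as the independence complex associated with $I(H)R$ on the vertex set $\{x_1,\dots,x_n\}$ (vertices outside $V(H)$ are cone points, so they only affect connectivity by absorbing components). For a non-empty simplicial complex, $\dim_k \widetilde{H}^0 = c - 1$, where $c$ is the number of connected components.

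Next, since every edge of $H$ is an edge of $G$, any subset $\tau \subseteq \supp(\alpha)$ that contains no edge of $G$ also contains no edge of $H$. Therefore
\[
\Delta(G)\big|_\alpha \;\subseteq\; \Delta(H)\big|_\alpha,
\]
as subcomplexes on the common vertex set $\supp(\alpha)$. Enlarging a simplicial complex by adding faces can only merge connected components (the new $1$-faces identify previously separated components), never split them. Consequently, the number of connected components of $\Delta(H)\big|_\alpha$ is at most that of $\Delta(G)\big|_\alpha$, and so
\[
\dim_k \widetilde{H}^{0}(\Delta(H)\big|_\alpha;\, k) \;\le\; \dim_k \widetilde{H}^{0}(\Delta(G)\big|_\alpha;\, k).
\]

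Combining this with Hochster's formula yields $\beta_{i,\alpha}(H) \le \beta_{i,\alpha}(G)$, as required. The argument is essentially formal once Hochster's formula is in place, so there is no serious obstacle; the only point requiring care is the bookkeeping for $\Delta(H)$ when $V(H) \subsetneq V(G)$, but this is handled by the flat-extension convention already established in the preliminaries.
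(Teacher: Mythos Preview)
Your proof is correct and follows essentially the same route as the paper's: both reduce via Hochster's formula to comparing $\widetilde{H}^0$ of the restricted independence complexes, observe that $\Delta(G)\big|_\alpha \subseteq \Delta(H)\big|_\alpha$ because independent sets in $G$ are independent in $H$, and conclude by the monotonicity of the number of connected components under inclusion of complexes on the same vertex set. Your version is slightly more explicit in disposing of non-squarefree $\alpha$ and in noting the cone-point issue when $V(H)\subsetneq V(G)$, but there is no substantive difference in method.
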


\begin{proof} By Hochster's formula, Proposition \ref{prop.H}, when $|\alpha| = i+2$, the multigraded Betti numbers $\beta_{i,\alpha}(H)$ and $\beta_{i,\alpha}(G)$ measure the number of connected components of $\Delta(H)\big|_\alpha$ and $\Delta(G)\big|_\alpha$. Observe further that since $H$ is a subgraph of $G$, independent subsets in $G$ are also independent subsets in $H$. Thus, $\Delta(G)\big|_\alpha$ is a subcomplex (not necessarily induced) of $\Delta(H)\big|_\alpha$. Hence, the number of connected components in $\Delta(G)\big|_\alpha$ is always bounded below by that of $\Delta(H)\big|_\alpha$.
\end{proof}

\begin{remark} Since the edge ideal $I(G)$ is generated in degree 2, the multigraded Betti numbers $\beta_{i,\alpha}(I(G))$ when $|\alpha| = i+2$, under the natural graded structure, account for linear syzygies in the resolution of $I(G)$.
\end{remark}

\begin{corollary} \label{cor.linear}
Let $G$ be a simple graph on $n$ vertices, and let $H$ be a subgraph of $G$. Suppose that, under the natural graded structure of $R = k[x_1, \dots, x_n]$, the edge ideal $I(H)$ has a linear resolution. Then, for any $i \ge 0$ and $\alpha \in \ZZ^n$, we have
$$\beta_{i,\alpha}(H) \le \beta_{i,\alpha}(G).$$
\end{corollary}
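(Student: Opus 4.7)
The plan is to combine the hypothesis that $I(H)$ has a linear resolution with Lemma \ref{prop.linear}, reducing everything to the linear strand where the lemma already gives the inequality. The strategy splits the multidegrees $\alpha \in \ZZ^n$ into two disjoint cases depending on whether $|\alpha| = i+2$ or not.

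First, I would note that because $I(H)$ is generated in degree $2$, saying that $I(H)$ has a linear resolution means that the standard $\ZZ$-graded Betti numbers satisfy $\beta_{i,j}(I(H)) = 0$ for all $j \neq i+2$. Since the multigraded Betti numbers refine the graded ones via
\[
\beta_{i,j}(I(H)) = \sum_{\alpha \in \ZZ^n,\, |\alpha| = j} \beta_{i,\alpha}(I(H)),
\]
and all $\beta_{i,\alpha}(H)$ are nonnegative integers, this forces $\beta_{i,\alpha}(H) = 0$ whenever $|\alpha| \neq i+2$. In this case the desired inequality $\beta_{i,\alpha}(H) \leq \beta_{i,\alpha}(G)$ is automatic.

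For the remaining case $|\alpha| = i+2$, Lemma \ref{prop.linear} applies verbatim and yields $\beta_{i,\alpha}(H) \leq \beta_{i,\alpha}(G)$. Combining the two cases covers every $i \geq 0$ and every $\alpha \in \ZZ^n$, finishing the proof. There is no real obstacle here: the corollary is a clean packaging of Lemma \ref{prop.linear} once one recognizes that a linear resolution concentrates all multigraded Betti numbers in the linear strand, which is precisely the range already handled by the lemma.
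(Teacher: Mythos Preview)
Your proof is correct and follows essentially the same approach as the paper: both observe that a linear resolution forces $\beta_{i,\alpha}(H)=0$ whenever $|\alpha|\neq i+2$, reducing the claim to Lemma~\ref{prop.linear}. Your version simply spells out the refinement $\beta_{i,j}=\sum_{|\alpha|=j}\beta_{i,\alpha}$ explicitly, whereas the paper states this conclusion in one line.
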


\begin{proof} Since $I(H)$ has a linear resolution, nonzero multigraded Betti numbers of $I(H)$ only appear at $\beta_{i,\alpha}(H)$ where $|\alpha| = i+2$. The conclusion is a direct consequence of Lemma \ref{prop.linear}.
\end{proof}

The next two lemmas give explicit formulae for the multigraded Betti numbers for complete and complete bipartite graphs. Similar formulae in the naturally graded case were obtained in \cite[Theorems 5.1.1 and 5.3.8]{Jacques} (see also \cite{AHH}). Our arguments for the multigraded case are along the same line, which we shall include for completeness.

\begin{lemma} \label{prop.resKm}
Let $K_n$ be the complete graphs over $n$ vertices $V = \{x_1, \dots, x_n\}$. Then, the multigraded Betti numbers of $I(K_n)$ are given as follows:
$$\beta_{i, \alpha}(K_n) = \left\{ \begin{array}{ll}
i+1 & \text{if } \alpha \in \{0,1\}^n \text{ and } |\alpha| = i+2 \\
0 & \text{otherwise.}
\end{array} \right.$$
\end{lemma}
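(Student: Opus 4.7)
The plan is to apply Hochster's formula (Proposition \ref{prop.H}) directly. The first observation of Hochster's formula handles the ``otherwise'' case for non-squarefree $\alpha$: all nonzero multigraded Betti numbers of $I(K_n)$ sit in squarefree multidegrees $\alpha \in \{0,1\}^n$. So I may restrict attention to $\alpha \in \{0,1\}^n$, and it remains to compute $\widetilde{H}^{|\alpha|-i-2}(\Delta(K_n)\big|_\alpha;k)$.

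The key structural input is a description of the independence complex of $K_n$. Since every pair of vertices in $K_n$ is adjacent, the only independent sets are the empty set and the singletons. Thus $\Delta(K_n)$ is the $0$-dimensional simplicial complex consisting of $n$ isolated vertices, and for any squarefree $\alpha$ with $\supp(\alpha) \subseteq \{x_1,\ldots,x_n\}$, the restriction $\Delta(K_n)\big|_\alpha$ is just $|\alpha|$ isolated points.

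Now I compute the reduced cohomology of a disjoint union of $|\alpha|$ points: the only nonvanishing reduced cohomology is in degree $0$, where it has dimension $|\alpha|-1$. Plugging into Hochster's formula, $\beta_{i,\alpha}(K_n)$ is nonzero only when $|\alpha|-i-2=0$, i.e.\ $|\alpha|=i+2$, in which case $\beta_{i,\alpha}(K_n) = |\alpha|-1 = i+1$. This gives both branches of the claimed formula.

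There is no serious obstacle here; the lemma reduces to a direct computation once the independence complex is identified. The only point that might seem tempting to overcomplicate is the upper bound $|\alpha|\le n$, which is automatic since $\supp(\alpha)\subseteq\{x_1,\dots,x_n\}$, so there is no constraint missing from the stated formula. The argument is essentially a one-step application of Hochster's formula combined with the elementary fact about the reduced cohomology of a finite discrete space.
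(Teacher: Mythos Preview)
Your proof is correct and follows essentially the same approach as the paper: identify $\Delta(K_n)$ as a set of isolated vertices, observe that $\Delta(K_n)\big|_\alpha$ is $|\alpha|$ isolated points whose only nonzero reduced cohomology is $\widetilde{H}^0$ of dimension $|\alpha|-1$, and apply Hochster's formula to conclude.
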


\begin{proof}
Observe that the independence complex $\Delta = \Delta(K_n)$ consists of isolated vertices. Thus, the only nonzero reduced cohomology group of $\Delta\big|_\alpha$ is $\widetilde{H}^0(\Delta\big|_\alpha;k)$. This, together with Hochster's formula in Proposition \ref{prop.H}, implies that $\beta_{i,\alpha}(K_n) \not= 0$ only if $\alpha \in \{0,1\}^n$ and $|\alpha| = i+2$. In this case, $\Delta\big|_\alpha$ consists of $i+2$ isolated vertices, which give $i+2$ connected components. Therefore,
$$\dim_k \widetilde{H}^0(\Delta\big|_\alpha;k) = i+1,$$
and the lemma is proved.
\end{proof}

\begin{lemma} \label{prop.multi}
Let $G = K_{r_1, \dots, r_l}$ be the complete multipartite graph with $V = X_1 \cupdot \dots \cupdot X_l$ as the $l$-partition of its vertices, where $|X_t| = r_t$ for $t = 1, \dots, l$. Then, the multigraded Betti numbers of $I(G)$ are given as follows:
$$\beta_{i, \alpha}(G) = \left\{ \begin{array}{ll}
c_\alpha - 1 & \text{if } \alpha = (\gamma_1, \dots, \gamma_l) \in (0,1)^{r_1} \times \dots \times (0,1)^{r_l} \text{ and } \sum_{t=1}^l |\gamma_t| = i+2 \\
0 & \text{otherwise,}
\end{array} \right.$$
where $c_\alpha$ denotes the number of values $t$ such that $\supp(\gamma_t) \not= \emptyset$.
\end{lemma}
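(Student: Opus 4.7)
The plan is to apply Hochster's formula after carefully describing the independence complex of a complete multipartite graph and its restrictions to squarefree multidegrees. The approach parallels (and generalizes) the proof of Lemma \ref{prop.resKm}.

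First I would describe $\Delta = \Delta(G)$ for $G = K_{r_1,\dots,r_l}$. Since every pair of vertices in distinct parts $X_i, X_j$ is adjacent, an independent set in $G$ cannot contain vertices from two different parts. On the other hand, every subset of a single part $X_t$ is independent because $X_t$ contains no edges. Consequently, $\Delta$ is the \emph{disjoint union} of $l$ simplices, one full simplex on each vertex set $X_t$.

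Next I would describe the restriction $\Delta\big|_\alpha$ for a squarefree multidegree $\alpha = (\gamma_1,\dots,\gamma_l) \in \{0,1\}^{r_1} \times \dots \times \{0,1\}^{r_l}$. Since restricting commutes with taking disjoint unions, $\Delta\big|_\alpha$ is the disjoint union of the simplices on $\supp(\gamma_t)$ for those $t$ with $\supp(\gamma_t) \neq \emptyset$. Thus $\Delta\big|_\alpha$ is a disjoint union of exactly $c_\alpha$ contractible pieces. Its reduced cohomology is therefore concentrated in degree $0$, and
\[
\dim_k \widetilde{H}^j(\Delta\big|_\alpha; k) = \begin{cases} c_\alpha - 1 & \text{if } j = 0, \\ 0 & \text{otherwise.}\end{cases}
\]

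Finally I would apply Hochster's formula (Proposition \ref{prop.H}). Since nonzero multigraded Betti numbers only occur at squarefree $\alpha \in \{0,1\}^n$, and since the cohomological degree $|\alpha| - i - 2$ must equal $0$ for a nonvanishing contribution, we conclude that $\beta_{i,\alpha}(G) \neq 0$ forces $|\alpha| = i+2$, in which case $\beta_{i,\alpha}(G) = c_\alpha - 1$. All other multidegrees yield $0$, giving the stated formula. There is no real obstacle in this argument; the only point requiring care is the bookkeeping that $|\alpha| = \sum_t |\gamma_t|$ matches the homological degree $i+2$ enforced by Hochster's formula, and that the $c_\alpha = 1$ case correctly produces $\beta_{i,\alpha} = 0$ (since a single simplex is acyclic).
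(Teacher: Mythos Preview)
Your proposal is correct and follows essentially the same route as the paper: identify $\Delta(G)$ as a disjoint union of simplices on the parts $X_t$, restrict to $\supp(\alpha)$ to obtain $c_\alpha$ disjoint simplices, note that only $\widetilde{H}^0$ can be nonzero, and read off $\beta_{i,\alpha}$ from Hochster's formula. Your write-up is slightly more explicit (e.g., justifying why independent sets lie in a single part and noting the $c_\alpha=1$ degenerate case), but the argument is the same.
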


\begin{proof}
Observe that the independence complex $\Delta = \Delta(G)$ is the disjoint union of $l$ simplices over the vertices $X_1, \dots, X_l$. Therefore, the only nonzero reduced cohomology group of $\Delta\big|_\alpha$ is again $\widetilde{H}^0(\Delta\big|_\alpha;k)$. Coupled with Hochster's formula, Proposition \ref{prop.H}, it follows that $\beta_{i,\alpha}(G) \not= 0$ only if $\alpha = (\gamma_1, \dots, \gamma_l) \in (0,1)^{r_1} \times \dots \times (0,1)^{r_l}$ and $|\alpha| = \sum_{t=1}^l |\gamma_t| = i+2.$ In this case, $\Delta\big|_\alpha$ is the disjoint union of $c_\alpha$ simplices, and thus, has $c_\alpha$ connected components. Hence,
$$\dim_k \widetilde{H}^0(\Delta\big|_\alpha;k) = c_\alpha-1,$$
and the assertion is proved.
\end{proof}

\begin{corollary} \label{prop.resKrs}
Let $K_{r,s}$ be the complete bipartite graph with $r, s \ge 1$. Then the multigraded Betti numbers of $I(K_{r,s})$ is given as follows:
$$\beta_{i, \alpha}(K_{r,s}) = \left\{ \begin{array}{ll}
1 & \text{if } \alpha = (\gamma, \eta), \0 \not= \gamma \in (0,1)^r, \0 \not= \eta \in (0,1)^s, \text{ and } |\gamma| + |\eta| = i+2 \\
0 & \text{otherwise.}
\end{array} \right.$$
\end{corollary}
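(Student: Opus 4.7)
The plan is to derive this as a direct specialization of Lemma \ref{prop.multi} to the two-part case, since $K_{r,s} = K_{r_1, r_2}$ with $r_1 = r$ and $r_2 = s$.

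First I would invoke Lemma \ref{prop.multi} with $l = 2$, $X_1$ and $X_2$ being the two sides of the bipartition. This immediately tells us that $\beta_{i,\alpha}(K_{r,s}) = 0$ unless $\alpha = (\gamma, \eta)$ lies in $\{0,1\}^r \times \{0,1\}^s$ and $|\gamma|+|\eta| = i+2$, in which case $\beta_{i,\alpha}(K_{r,s}) = c_\alpha - 1$ where $c_\alpha$ counts how many of $\gamma, \eta$ have nonempty support.

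Next I would do the case analysis on $c_\alpha \in \{0,1,2\}$. The case $c_\alpha = 0$ forces $\alpha = \mathbf{0}$, which is excluded by $|\gamma|+|\eta| = i+2 \ge 2$. If $c_\alpha = 1$, exactly one of $\gamma, \eta$ is zero; then $\Delta(K_{r,s})\big|_\alpha$ is a single simplex (hence connected), giving $c_\alpha - 1 = 0$. The only remaining possibility producing a nonzero Betti number is $c_\alpha = 2$, i.e. $\gamma \ne \mathbf{0}$ and $\eta \ne \mathbf{0}$, in which case $c_\alpha - 1 = 1$, matching the claimed formula.

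There is no real obstacle here: the corollary is essentially a transcription of Lemma \ref{prop.multi} into the bipartite notation, with the observation that for $l = 2$ the quantity $c_\alpha - 1$ collapses to $\{0,1\}$ and equals $1$ precisely when the support of $\alpha$ meets both sides of the bipartition.
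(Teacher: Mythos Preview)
Your proposal is correct and matches the paper's approach exactly: the paper simply states that the conclusion is a direct consequence of Lemma \ref{prop.multi} by taking $l = 2$. Your case analysis on $c_\alpha$ just spells out the one-line specialization in a bit more detail.
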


\begin{proof} The conclusion is a direct consequence of Lemma \ref{prop.multi} by taking $l = 2$.
\end{proof}

The next lemma shows that for an induced subgraph, at appropriate multidegrees, its Betti numbers agree with those of the bigger graph.

\begin{lemma} \label{prop.inducedSG}
Let $G = (V,E)$ be a simple graph over $n$ vertices and let $H = (V',E')$ be an induced subgraph of $G$ on a subset $V' \subseteq V$ of the vertices. Then for any $i \ge 0$ and $\alpha \in \ZZ^n$, we have
$$\beta_{i,\alpha}(H) \le \beta_{i, \alpha}(G).$$
Moreover, for any $i \ge 0$ and any multidegree $\alpha = (\alpha_1, \dots, \alpha_n) \in \ZZ^n$ such that $\supp(\alpha) \subseteq V'$, we have
$$\beta_{i,\alpha}(G) = \beta_{i,\alpha}(H).$$
\end{lemma}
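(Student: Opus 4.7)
The plan is to use Hochster's formula (Proposition \ref{prop.H}) and establish the equality in the supported case first, from which the inequality follows immediately. Since nonzero multigraded Betti numbers of any edge ideal appear only at squarefree multidegrees, I would reduce at the outset to $\alpha \in \{0,1\}^n$.

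For the inequality, I would split into two cases depending on $\supp(\alpha)$. If $\supp(\alpha) \not\subseteq V'$, then under the convention established in the notation remark (viewing $I(H)$ as an ideal of $R$ via the flat extension $k[V'] \hookrightarrow R$), we have $\beta_{i,\alpha}(H) = 0$, and the inequality is trivial. If $\supp(\alpha) \subseteq V'$, the inequality is a consequence of the equality claim, so the heart of the argument reduces to that case.

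For the equality when $\supp(\alpha) \subseteq V'$, the key step is to verify the identity of restricted independence complexes
\[
\Delta(G)\big|_\alpha = \Delta(H)\big|_\alpha.
\]
Here the hypothesis that $H$ is an \emph{induced} subgraph is used crucially: any face $\tau$ of either complex satisfies $\tau \subseteq \supp(\alpha) \subseteq V'$, and because $H$ and $G$ share exactly the same edges between vertices of $V'$, $\tau$ is an independent set in $G$ if and only if it is an independent set in $H$. Once this identification is made, Hochster's formula gives
\[
\beta_{i,\alpha}(G) = \dim_k \widetilde{H}^{|\alpha|-i-2}(\Delta(G)\big|_\alpha;k) = \dim_k \widetilde{H}^{|\alpha|-i-2}(\Delta(H)\big|_\alpha;k) = \beta_{i,\alpha}(H),
\]
proving both assertions simultaneously.

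I do not anticipate a real obstacle in this argument; it is essentially a bookkeeping exercise around Hochster's formula. The only point requiring care is the convention bridging $\beta_{i,\alpha}(I(H))$ (computed in $k[V']$) and $\beta_{i,\alpha}(I(H)R)$ (computed in $R$), which is precisely why the case $\supp(\alpha) \not\subseteq V'$ needs to be flagged separately, and why the induced subgraph hypothesis (as opposed to mere subgraph) is essential for the equality of restricted independence complexes.
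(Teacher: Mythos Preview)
Your argument is correct and follows the same approach as the paper: both rely on Hochster's formula together with the identification $\Delta(G)\big|_\alpha = \Delta(H)\big|_\alpha$ when $\supp(\alpha)\subseteq V'$, which is exactly where the induced-subgraph hypothesis enters. Your treatment is in fact slightly more explicit than the paper's, since you separate out the case $\supp(\alpha)\not\subseteq V'$ (where $\beta_{i,\alpha}(H)=0$ by the flat-extension convention) and then deduce the inequality from the equality, whereas the paper passes over this case split more briskly.
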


\begin{proof} Observe that since $H$ is an induced subgraph of $G$, the independence complex $\Delta(H)$ is an induced subcomplex of the independence complex $\Delta(G)$. In particular, for any $\alpha \in \ZZ^n$, $\Delta(H)\big|_\alpha$ is an induced subcomplex of $\Delta(G)\big|_\alpha$. The first statement of the lemma thus follows from Hochster's formula in Proposition \ref{prop.H}.

For the second statement of the lemma, it suffices to observe further that if $\supp(\alpha) \subseteq V'$ then $\Delta(H)\big|_\alpha = \Delta(G)\big|_\alpha$, and therefore have the same cohomology groups. The conclusion again follows from Hochster's formula in Proposition \ref{prop.H}.
\end{proof}

We are now ready to prove our first main result, Theorem \ref{thm.KmKrs}.

\noindent{\bf Proof of Theorem \ref{thm.KmKrs}.} To prove the ($\Leftarrow$) implication, observe that the 0th Betti numbers of the edge ideal represent edges of the graph. Thus, if $\beta_{0,\alpha}(H) \le \beta_{0,\alpha}(G)$ for all $\alpha \in \ZZ^n$ then, in particular, edges of $H$ are edges in $G$, and we are done.

Let us now prove the ($\Rightarrow$) implication. It follows from Lemmas \ref{prop.resKm} and \ref{prop.resKrs} that under the natural graded structure of $R = k[x_1, \dots, x_n]$, both $I(K_m)$ and $I(K_{r,s})$ have linear resolutions. Thus, by Corollary \ref{cor.linear}, if $H$ is a subgraph of $G$ then for all $i \ge 0$ and $\alpha \in \ZZ^n$, we have
$$\beta_{i,\alpha}(H) \le \beta_{i,\alpha}(G).$$

Consider furthermore the case that $H = K_m$. As observed before, in this case, if $G$ contains $H$ as a subgraph, then $H$ is an induced subgraph of $G$. It then follows from Lemma \ref{prop.inducedSG} that for $\alpha \in \ZZ^n$ such that $\supp(\alpha)$ is a subset of the vertices in $H$, we have
$$\beta_{i,\alpha}(H) = \beta_{i,\alpha}(G) \text{ for all } i \ge 0.$$
The theorem is proved.
\qed

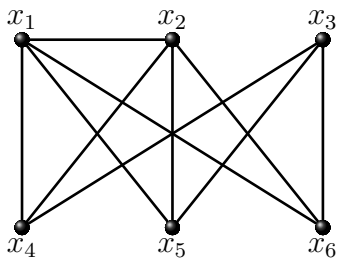
\begin{figure}[h]
\begin{center}
\begin{tikzpicture}

\draw [line width=1pt ] (2,3)--(2,0.5)  ;
\draw [line width=1pt ] (2,3)--(4,0.5)  ;
\draw [line width=1pt ] (2,3)--(6,0.5)  ;
\draw [line width=1pt ] (4,3)--(2,0.5)  ;
\draw [line width=1pt ] (4,3)--(4,0.5)  ;
\draw [line width=1pt ] (4,3)--(6,0.5)  ;
\draw [line width=1pt ] (6,3)--(2,0.5)  ;
\draw [line width=1pt ] (6,3)--(4,0.5)  ;
\draw [line width=1pt ] (6,3)--(6,0.5)  ;
\draw [line width=1pt ] (2,3)--(4,3)  ;

\shade [shading=ball, ball color=black]  (2,0.5) circle (.1) node [below] {$x_4$};
\shade [shading=ball, ball color=black]  (4,0.5) circle (.1) node [below] {$x_5$};
\shade [shading=ball, ball color=black]  (6,0.5) circle (.1) node [below] {$x_6$};
\shade [shading=ball, ball color=black]  (2,3) circle (.1) node [above] {$x_1$};
\shade [shading=ball, ball color=black]  (4,3) circle (.1) node [above] {$x_2$};
\shade [shading=ball, ball color=black]  (6,3) circle (.1) node [above] {$x_3$};

\end{tikzpicture}
\end{center}
\caption{A graph containing $K_{3,3}$ as a subgraph, having different corresponding multigraded Betti numbers compared to that of $K_{3,3}$.}\label{fig.K33}
\end{figure}

\begin{example} Let $G$ be the graph given in Figure \ref{fig.K33}. Then $G$ contains $K_{3,3}$ as a subgraph. However, we have
$$2 = \beta_{1, x_1x_2x_4}(G) > \beta_{1,x_1x_2x_4}(K_{3,3}) = 1.$$
This example illustrates that the second statement in Theorem \ref{thm.KmKrs} is not necessarily true for $H = K_{r,s}$. The reason is that a $K_m$ subgraph is an induced subgraph, while a $K_{r,s}$ subgraph needs not be. The second statement in Theorem \ref{thm.KmKrs} would be true if we assume that $G$ is a \emph{bipartite} graph.
\end{example}


\section{Multigraded first syzygies} \label{sec.syz}

In this section, we shall show that to characterize the property that a graph contains $K_m$ or $K_{r,s}$ as a subgraph, with only one exception, it suffices to consider the first syzygy module of their edge ideals.

We begin with our result for the complete graph.

\begin{theorem}\label{thm.syzKm}
Let $G$ be a simple graph on $n$ vertices. Let $H$ be the complete graph $K_m$ on a subset of the vertices in $G$. Then
$$\beta_{1,\alpha}(H) \le \beta_{1,\alpha}(G) \ \text{ for all multidegrees } \alpha \in \ZZ^n$$
if and only if $G$ contains $H = K_m$ as a subgraph.
\end{theorem}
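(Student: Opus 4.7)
The $(\Leftarrow)$ implication is immediate from Theorem \ref{thm.KmKrs}, since that theorem gives $\beta_{i,\alpha}(H) \le \beta_{i,\alpha}(G)$ for \emph{all} $i$ and $\alpha$ whenever $G$ contains $H = K_m$ as a subgraph. The substance is the $(\Rightarrow)$ direction: assuming only that the first-syzygy inequality holds in every multidegree, I want to deduce that every pair of vertices of $H$ is joined by an edge of $G$.

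My plan is to use Lemma \ref{prop.resKm} to locate the degrees in which $\beta_{1,\alpha}(K_m)$ is nonzero, and then read off what the hypothesis forces via Hochster's formula. Say $V(H) = \{x_1, \dots, x_m\}$. By Lemma \ref{prop.resKm}, $\beta_{1,\alpha}(H) \neq 0$ exactly when $\alpha$ is a squarefree multidegree with $|\alpha| = 3$ and $\supp(\alpha) \subseteq V(H)$, in which case $\beta_{1,\alpha}(H) = 2$. The hypothesis thus reduces to requiring that for every $3$-element subset $T = \{x_i, x_j, x_k\} \subseteq V(H)$,
\[
\beta_{1,T}(G) \ge 2.
\]

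The key step is the local analysis at each such triple $T$. By Hochster's formula (Proposition \ref{prop.H}),
\[
\beta_{1,T}(G) = \dim_k \widetilde{H}^{0}\bigl(\Delta(G)\big|_T; k\bigr),
\]
which is one less than the number of connected components of $\Delta(G)|_T$. Now $\Delta(G)|_T$ is the independence complex of the induced subgraph $G[T]$ on three vertices, and a quick case check shows that this complex has exactly $3$ connected components if and only if $G[T]$ is a triangle (if $G[T]$ has two edges, the independence complex is an edge plus an isolated vertex, giving $2$ components; if it has one or zero edges, $\Delta(G)|_T$ is connected). Hence $\beta_{1,T}(G) \ge 2$ forces $G[T]$ to be a triangle, i.e., all three edges of $T$ lie in $G$.

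Applying this to every triple in $V(H)$ yields that every pair $\{x_i, x_j\} \subseteq V(H)$ is an edge of $G$, so $G$ contains $H = K_m$ as a subgraph (in fact as an induced subgraph). There is no real obstacle here — the proof is essentially a one-paragraph combinatorial unpacking of Hochster's formula on $3$-element subsets, made possible by the fact that $I(K_m)$ has a linear resolution so the nonzero first-syzygy multidegrees are precisely the triples in $V(H)$.
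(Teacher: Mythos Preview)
Your argument is correct and follows essentially the same strategy as the paper's proof: both reduce the $(\Rightarrow)$ direction to the observation that $\beta_{1,xyz}(K_m)=2$ for every triple $\{x,y,z\}\subseteq V(H)$, and then show that $\beta_{1,xyz}(G)\ge 2$ forces $\{x,y,z\}$ to span a triangle in $G$. The only difference is the tool used for that last step: the paper argues by contradiction via the Taylor resolution (if $xy\notin E(G)$ then at most one Taylor syzygy has multidegree $xyz$, so $\beta_{1,xyz}(G)\le 1$), whereas you compute $\beta_{1,T}(G)$ directly from Hochster's formula by counting connected components of $\Delta(G)\big|_T$. Both are short and equivalent; your route has the mild advantage of staying entirely within the Hochster framework already set up in the paper, while the paper's Taylor-resolution bound avoids the explicit case analysis on $G[T]$.
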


\begin{proof} By Lemma \ref{prop.resKm}, it can be seen that $I(K_m)$ has a linear resolution under the natural graded structure of the corresponding polynomial ring. The ($\Leftarrow$) implication thus follows from Lemma \ref{prop.linear}.

To prove the ($\Rightarrow$) implication, by considering the induced subgraph of $G$ over the vertices in $H$ and making use of Lemma \ref{prop.inducedSG}, we can first assume that $G$ and $H$ are on the same vertex set (i.e. $m = n$). We shall now show that if
$$\beta_{1,\alpha}(H) \le \beta_{1,\alpha}(G) \ \text{ for all multidegrees } \alpha \in \ZZ^n$$
then $G$ is the complete graph $K_m$.

The statement can be verified directly if $n \le 2$. Assume that $n \ge 3$. Consider any two vertices $x,y$ in $G$, and suppose to the contrary that $xy$ is not an edge in $G$. Let $z$ be another vertex that is different from $x$ and $y$. By Lemma \ref{prop.resKm} and the hypothesis, we have
$$\beta_{1,xyz}(G) \ge \beta_{1,xyz}(H) = 2.$$
On the other hand, since $xy$ is not an edge in $G$, in the Taylor resolution of $I(G)$ (which is not necessarily minimal), there is at most one syzygy of degree $xyz$, which if exists necessarily comes from edges $yz$ and $xz$. This implies that
$$\beta_{1,xyz}(G) \le 1.$$
We arrive at a contradiction, and the statement is proved.
\end{proof}

Our characterization in the case for the complete bipartite graph is stated in the following theorem.

\begin{theorem} \label{thm.syzKrs}
Let $G$ be a simple graph on $n$ vertices. Let $H$ be the complete bipartite graph $K_{r,s}$ on a subset of the vertices in $G$. Then
$$\beta_{1,\alpha}(H) \le \beta_{1,\alpha}(G) \ \text{ for all multidegrees } \alpha \in \ZZ^n$$
if and only if $G$ contains either $H = K_{r,s}$ or a $K_{\underbrace{2, \dots, 2}_{t \text{ times}}, a, b}$, where $t \ge 1$ and $a+b+2t = r+s$, as a subgraph.
\end{theorem}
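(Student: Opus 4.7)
The plan is to handle both directions separately. For $(\Leftarrow)$: if $G \supseteq K_{r,s}$, then $I(K_{r,s})$ has a linear resolution by Corollary~\ref{prop.resKrs}, so Lemma~\ref{prop.linear} delivers $\beta_{1,\alpha}(K_{r,s}) \le \beta_{1,\alpha}(G)$ for every $\alpha$. In the other alternative, I will work with the $K_{2,\dots,2,a,b}$ positioned so that its $t$ two-element parts are pairs $\{x_i, y_i\} \in X \times Y$, while the size-$a$ part lies inside $X$ and the size-$b$ part inside $Y$ (so $a = r - t$ and $b = s - t$); this is the positioning that will come out of the $(\Rightarrow)$ argument. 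For every squarefree degree-$3$ multidegree $\alpha$ with $\beta_{1,\alpha}(K_{r,s}) \ne 0$, $\supp(\alpha)$ meets both $X$ and $Y$ and so cannot lie inside any single part of $K_{2,\dots,2,a,b}$ (the size-$2$ parts are too small, and the $a$- and $b$-parts are monochromatic in $X$ or $Y$). Hence $c_\alpha \ge 2$ in the notation of Lemma~\ref{prop.multi}, and combining Lemmas~\ref{prop.multi} and~\ref{prop.linear} yields $\beta_{1,\alpha}(G) \ge \beta_{1,\alpha}(K_{2,\dots,2,a,b}) \ge 1 = \beta_{1,\alpha}(K_{r,s})$.

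For $(\Rightarrow)$, I would first apply Lemma~\ref{prop.inducedSG} to replace $G$ by its induced subgraph on $V(H)$, since this preserves every $\beta_{1,\alpha}$ with $\supp(\alpha) \subseteq V(H)$ — the only multidegrees in which the hypothesis is a nontrivial constraint. Hochster's formula identifies $\beta_{1,\{x,y,z\}}(G)$ with one less than the number of connected components of the independence complex on $\{x,y,z\}$, and a quick case check by number of edges on $\{x,y,z\}$ shows this is at least $1$ exactly when the induced subgraph of $G$ on $\{x,y,z\}$ carries at least two edges. Together with Corollary~\ref{prop.resKrs}, the hypothesis rephrases as the combinatorial condition: every triple $\{x,y,z\} \subseteq V(H)$ whose support meets both sides of the $K_{r,s}$-bipartition spans at least two edges in $G$.

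Let $M := \{xy : x \in X,\ y \in Y,\ xy \notin E(G)\}$ record the missing bipartite edges. If $M = \emptyset$ then $G \supseteq K_{r,s}$ and the first alternative holds. Otherwise, the key step is to apply the triple condition to $\{x, y, z\}$ for each $xy \in M$ and each $z \in V(H) \setminus \{x, y\}$: any such triple straddles the bipartition, so the missing edge $xy$ forces both $xz \in E(G)$ and $yz \in E(G)$. Consequently every endpoint of a missing bipartite edge is adjacent in $G$ to every vertex of $V(H)$ other than its partner, and this rigidity forces $M$ to be a matching $\{x_iy_i\}_{i=1}^t$ with $t \ge 1$, since two missing edges sharing a vertex would demand a vertex be simultaneously adjacent and non-adjacent to some common partner. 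Setting $a = r - t$ and $b = s - t$, I will then exhibit $K_{2,\dots,2,a,b}$ with two-parts $\{x_i, y_i\}$, $a$-part $X \setminus \{x_1,\dots,x_t\}$, and $b$-part $Y \setminus \{y_1,\dots,y_t\}$ as a subgraph of $G$: every inter-part edge incident to some $x_i$ or $y_i$ is in $G$ by the adjacency property just established, while every inter-part edge between the $a$- and $b$-parts is a bipartite pair with neither endpoint in the matching and hence not in $M$.

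The main obstacle will be the combinatorial analysis of $M$ in the $(\Rightarrow)$ direction, i.e.\ extracting the strong adjacency information from the triple condition and recognizing the resulting multipartite skeleton. In particular, the verification that inter-part edges between the $a$- and $b$-parts are automatically in $G$ depends critically on $M$ being defined as the \emph{full} set of missing bipartite edges rather than merely the matching edges, a subtle bookkeeping point that ties the structure of $G$ to the hypothesis in the right way.
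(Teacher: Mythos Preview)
Your proof is correct, and for the $(\Rightarrow)$ direction it takes a genuinely different route from the paper. The paper argues by induction on $r+s$: it picks a single missing bipartite edge $x_1y_1$, uses the triple condition to show $x_1$ and $y_1$ are adjacent to every other vertex, peels off $\{x_1,y_1\}$ as one size-$2$ part, and then recurses on the induced subgraph $G'$ against $K_{r-1,s-1}$. You instead give a direct structural argument: you collect \emph{all} missing bipartite edges into $M$, show in one stroke that every endpoint of an edge in $M$ is universally adjacent, deduce that $M$ is a matching, and read off the full multipartite decomposition at once. Your approach is cleaner in that it identifies the global structure (the matching $M$) immediately rather than discovering it one layer at a time, and it makes transparent exactly why $a = r-t$ and $b = s-t$. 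The paper's induction buys a slightly shorter write-up by offloading the verification of the multipartite structure to the inductive hypothesis.

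For $(\Leftarrow)$ in the $K_{2,\dots,2,a,b}$ case, you are also more careful than the paper. The paper simply says the implication ``follows from Corollary~\ref{cor.linear}'' since multipartite edge ideals have linear resolutions, but that corollary yields $\beta_{1,\alpha}(K_{2,\dots,2,a,b}) \le \beta_{1,\alpha}(G)$, not $\beta_{1,\alpha}(K_{r,s}) \le \beta_{1,\alpha}(G)$; one still needs $\beta_{1,\alpha}(K_{r,s}) \le \beta_{1,\alpha}(K_{2,\dots,2,a,b})$, which depends on how the parts sit relative to the bipartition $X \cupdot Y$. Your observation that the size-$2$ parts are cross-pairs and the $a$- and $b$-parts are monochromatic---so no three vertices meeting both $X$ and $Y$ can lie in a single part---is exactly the missing ingredient, and it is good that you spell it out.
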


\begin{proof} By applying Lemma \ref{prop.inducedSG}, we may assume that $G$ and $K_{r,s}$ share the same vertex set (i.e., $n = r+s$). It can be seen from Lemma \ref{prop.multi} that the edge ideal of a multipartite graph has a linear resolution. Thus, the $(\Leftarrow)$ implication follows from Corollary \ref{cor.linear}.

To prove the $(\Rightarrow)$ implication we shall use induction on $n = r+s$. The statement is trivial for $n \le 2$. Assume that $n \ge 3$. Let $V = X \cupdot Y$ be the bipartition of the vertices in $K_{r,s}$, where $X = \{x_1, \dots, x_r\}$ and $Y = \{y_1, \dots, y_s\}$. It follows from Corollary \ref{prop.resKrs} that for any $1\le i \not= j \le r$ and $1 \le l \not= m \le s$,
$$\beta_{1, x_iy_lx_j}(K_{r,s}) \not= 0 \text{ and } \beta_{1, y_lx_jy_m}(K_{r,s}) \not= 0.$$
This implies that for any $1\le i \not= j \le r$ and $1 \le l \not= m \le s$,
\begin{align}
\beta_{1, x_iy_lx_j}(G) \not= 0 \text{ and } \beta_{1, y_lx_jy_m}(G) \not= 0. \label{eq.not0}
\end{align}

Suppose that $K_{r,s}$ is not a subgraph of $G$. Without loss of generality, assume that $x_1y_1 \not\in E$. Then, by letting $i = 1$ and $l = 1$ in (\ref{eq.not0}), we can conclude that for any $1 \le j \le r$ and any $1 \le m \le s$,  $x_1x_j, x_1y_m, y_1x_j, y_1y_m$ are edges in $G$. In particular, the vertices of $G$ can be partitioned into $V = X_1 \cupdot V'$, where $X_1 = \{x_1, y_1\}$ and $V' = V \setminus X_1$.

Now, let $G'$ be the induced subgraph of $G$ on the vertex set $V'$. Observe that the induced subgraph of $K_{r,s}$ on $V'$ is the complete bipartite graph $K_{r-1,s-1}$. Consider any $\gamma' \in \ZZ^{|V'|}$ and let $\gamma \in \ZZ^n$ be the multidegree obtained from $\gamma'$ by inserting a 0 to the coordinates corresponding to $x_1$ and $y_1$. It follows from Lemma \ref{prop.inducedSG} that
$$\beta_{1,\gamma'}(G') = \beta_{1, \gamma}(G) \ge \beta_{1, \gamma}(K_{r,s}) = \beta_{1,\gamma'}(K_{r-1,s-1}).$$
By the induction hypothesis, $G'$ contains a subgraph $H'$ that is either $K_{\underbrace{2, \dots, 2}_{t-1 \text{ times}}, a,b}$, where $2(t-1) + a + b = r+s-2$, or $K_{r-1,s-1}$.

If $H'$ is $K_{r-1,s-1}$, then since $x_1$ and $y_1$ are connected to all the vertices of $H'$, $G$ clearly contains $K_{r,s}$ as a subgraph. If $H' = K_{\underbrace{2, \dots, 2}_{t-1 \text{ times}}, a,b}$ then $G$ contains $K_{\underbrace{2, \dots, 2}_{t \text{ times}}, a,b}$ as a subgraph. The theorem is proved.
\end{proof}

Theorem \ref{thm.syzKrs} restricted to $K_{3,3}$, the case of interest for planar graphs, give the following characterization. Note that $K_{2,2,2}$ is a planar graph, while $K_{3,3}$ is not.

\begin{figure}[h]
\begin{center}
\begin{tikzpicture}

\draw [line width=1pt ] (-1,-2)--(2,0)  ;
\draw [line width=1pt ] (-1,-2)--(1,2)  ;
\draw [line width=1pt ] (-1,-2)--(-1,2)  ;
\draw [line width=1pt ] (-1,-2)--(-2,0)  ;
\draw [line width=1pt ] (1,-2)--(2,0)  ;
\draw [line width=1pt ] (1,-2)--(1,2)  ;
\draw [line width=1pt ] (1,-2)--(-1,2)  ;
\draw [line width=1pt ] (1,-2)--(-2,0)  ;
\draw [line width=1pt ] (2,0)--(-1,2)  ;
\draw [line width=1pt ] (2,0)--(-2,0)  ;
\draw [line width=1pt ] (1,2)--(-1,2)  ;
\draw [line width=1pt ] (1,2)--(-2,0)  ;

\shade [shading=ball, ball color=black]  (-1,-2) circle (.1) ;
\shade [shading=ball, ball color=black]  (1,-2) circle (.1) ;
\shade [shading=ball, ball color=black]  (2,0) circle (.1) ;
\shade [shading=ball, ball color=black]  (1,2) circle (.1) ;
\shade [shading=ball, ball color=black]  (-1,2) circle (.1) ;
\shade [shading=ball, ball color=black]  (-2,0) circle (.1) ;

\draw (0,-2) ellipse (1.5cm and 0.5cm) node {$X_1$};
\draw [rotate=60] (0.1,1.8) ellipse (1.5cm and 0.5cm) node {$X_2$};
\draw [rotate=115] (0.25,-1.85) ellipse (1.5cm and 0.5cm) node {$X_3$};

\end{tikzpicture}
\end{center}
\caption{$K_{2,2,2}$ --- a planar graph having a nonzero first Betti number at every multidegree that $K_{3,3}$ does.}\label{fig.K222}
\end{figure}
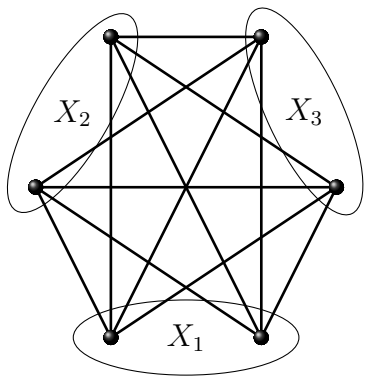

\begin{corollary} \label{cor.planar}
Let $G$ be a simple graph on $n \ge 6$ vertices. Let $H$ be the complete bipartite graph $K_{3,3}$ on a subset $W$ of 6 vertices in $G$. Then
\begin{align}
\beta_{1,\alpha}(H) \le \beta_{1,\alpha}(G) \text{ for all multidegrees } \alpha \in \ZZ^n \tag{\dag}
\end{align}
if and only if $G$ contains either $H = K_{3,3}$ or a $K_{2,2,2}$ as a subgraph. In particular, if the induced subgraph $G\big|_W$ of $G$ on $W$ is not a $K_{2,2,2}$ then $G\big|_W$ contains a $K_{3,3}$ as a subgraph if and only if {\rm ($\dag$)} holds.
\end{corollary}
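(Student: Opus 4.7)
The plan is to specialize Theorem \ref{thm.syzKrs} to $r=s=3$ and enumerate the short resulting list of complete multipartite graphs. By that theorem, condition ($\dag$) is equivalent to $G$ containing either $K_{3,3}$ or some $K_{\underbrace{2,\dots,2}_{t},a,b}$ with $t \ge 1$, $a,b \ge 1$, and $2t+a+b=6$. The solutions are exactly three multipartite graphs on six vertices: $K_{1,2,3}$ (from $t=1$, $\{a,b\}=\{1,3\}$), $K_{2,2,2}$ (from $t=1$, $a=b=2$), and $K_{1,1,2,2}$ (from $t=2$, $a=b=1$); the case $t=3$ forces $a+b=0$, so is vacuous.

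The combinatorial heart of the argument is to show that, among these three, only $K_{2,2,2}$ does not contain $K_{3,3}$ as a subgraph. For $K_{1,2,3}$ with parts $\{u\},\{v_1,v_2\},\{w_1,w_2,w_3\}$, the bipartition $\{u,v_1,v_2\}$ versus $\{w_1,w_2,w_3\}$ displays all nine cross-edges. For $K_{1,1,2,2}$ with parts $\{a\},\{b\},\{c,d\},\{e,f\}$, the bipartition $\{a,c,d\}$ versus $\{b,e,f\}$ likewise displays $K_{3,3}$. For $K_{2,2,2}$, a parity argument finishes things off: each of the three parts has even size $2$ while each block of any bipartition into two triples has odd size $3$, so at least one part must be split across the two sides, producing a cross-pair within a single part which is a non-edge of $K_{2,2,2}$. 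Hence $K_{3,3}\not\subseteq K_{2,2,2}$, and combining this with the enumeration yields the first equivalence.

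For the ``in particular'' clause, I would start by observing that every nonzero $\beta_{1,\alpha}(H)$ sits at a multidegree with $\supp(\alpha)\subseteq W$, and that at such multidegrees Lemma \ref{prop.inducedSG} gives $\beta_{1,\alpha}(G) = \beta_{1,\alpha}(G\big|_W)$. Thus ($\dag$) is equivalent to the analogous inequality with $G$ replaced by $G\big|_W$, and any multipartite subgraph produced by the first part of the corollary actually lives inside $G\big|_W$ and, since $|W|=6$, is spanning. Assume then that ($\dag$) holds and $G\big|_W$ is not (isomorphic to) $K_{2,2,2}$; the task is to force $G\big|_W \supseteq K_{3,3}$. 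The only case not already covered is when $G\big|_W$ contains a spanning $K_{2,2,2}$ but has a further edge $uv$, necessarily inside some part $\{u,v\}$ of that $K_{2,2,2}$. Writing the other two parts as $\{p,q\}$ and $\{r,s\}$, every cross-pair in the bipartition $\{u,p,q\}$ versus $\{v,r,s\}$ is either an edge of $K_{2,2,2}$ between distinct parts or the new edge $uv$; so $K_{3,3}\subseteq G\big|_W$. The converse is immediate from the first part.

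The step I expect to require the most care is the parity argument showing $K_{3,3}\not\subseteq K_{2,2,2}$, since it is precisely what singles out $K_{2,2,2}$ as the unique exception and so is the content that distinguishes this corollary from Theorem \ref{thm.syzKrs} itself; once this is in place, everything else reduces to finite checks and an appeal to Lemma \ref{prop.inducedSG}.
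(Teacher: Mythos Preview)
Your proof is correct and follows essentially the same route as the paper's: specialize Theorem~\ref{thm.syzKrs} to $r=s=3$, observe that $K_{1,2,3}$ and $K_{1,1,2,2}$ already contain $K_{3,3}$, and for the second statement show that $K_{2,2,2}$ plus any extra edge contains $K_{3,3}$. Your reduction of the ``in particular'' clause to $G\big|_W$ via Lemma~\ref{prop.inducedSG} is more explicit than the paper, which simply asserts the extra-edge observation without spelling out the passage to the induced subgraph.

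One small recalibration: the parity argument showing $K_{3,3}\not\subseteq K_{2,2,2}$, which you flag as the step requiring the most care, is not actually needed to prove the corollary as stated. The equivalence ``($\dag$) if and only if $G$ contains $K_{3,3}$ or $K_{2,2,2}$'' holds regardless of whether $K_{2,2,2}$ contains $K_{3,3}$; if it did, the second disjunct would merely be redundant, not false. Likewise the ``in particular'' clause would remain true (its hypothesis would just become superfluous). Your parity argument explains \emph{why} the corollary is phrased with $K_{2,2,2}$ as a genuine separate case---and matches the paper's surrounding commentary that $K_{2,2,2}$ is planar while $K_{3,3}$ is not---but the paper's own proof omits it entirely.
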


\begin{proof} Notice that $K_{2,3,1}$ and $K_{2,2,1,1}$ both contain $K_{3,3}$ as a subgraph. The first statement of our assertion is a direct consequence of Theorem \ref{thm.syzKrs}. For the second statement of our assertion, it suffices to observe that adding any extra edge to a $K_{2,2,2}$ always results in a graph that contains $K_{3,3}$ as a subgraph.
\end{proof}

\begin{remark} Even though $K_{2,2,2}$ has a nonzero Betti number at every multidegree that $K_{3,3}$ does, for $|\alpha| = 3$ we in general have $\beta_{1,\alpha}(K_{2,2,2}) \not= \beta_{1,\alpha}(K_{3,3})$. Particularly, for any three distinct vertices $x_i, x_j$ and $x_k$, where $1 \le i,j,k \le 6$, we have $\beta_{1, x_ix_jx_k}(K_{3,3}) = 2$. On the other hand, in Figure \ref{fig.K222}, if $x_i, x_j, x_k$ belong to 3 distinct subsets $X_1, X_2$ and $X_3$ then $\beta_{1, x_ix_jx_k}(K_{2,2,2}) = 2$ and if two of the vertices $x_i, x_j, x_k$ belong to the same subset $X_l$, then $\beta_{1,x_ix_jx_k}(K_{2,2,2}) = 1.$
\end{remark}

Inspired by Example \ref{ex.notSC} and Corollary \ref{cor.linear}, we conclude the paper with the following problem.

\begin{problem} Let $G$ be a simple graph on $n$ vertices. Identify classes of graphs $H$ on a subset of the vertices of $G$ such that if $G$ contains $H$ as a subgraph then
$$\beta_{1,\alpha}(H) \le \beta_{1,\alpha}(G) \text{ for all } \alpha \in \ZZ^n,$$
or more generally,
$$\beta_{i,\alpha}(H) \le \beta_{i,\alpha}(G) \text{ for all } i \ge 0 \text{ and } \alpha \in \ZZ^n.$$
\end{problem}


\end{document}